\DeclareMathOperator{\rank}{rank}
\DeclareMathOperator{\dist}{dist}
\DeclareMathOperator{\im}{Im}
\DeclareMathOperator{\chr}{Char}
\newcommand{\Ms}{\mathcal{M}}
\newcommand{\R}{\mathbb{R}}
\newcommand{\Ra}{\Rightarrow}
\newcommand{\Q}{\mathbb{Q}}
\newcommand{\N}{\mathbb{N}}
\renewcommand{\H}{\mathbb{H}}
\newcommand{\C}{\mathbb{C}}
\newcommand{\F}{\mathbb{F}}
\newcommand{\Z}{\mathbb{Z}}
\renewcommand{\epsilon}{\varepsilon}
\DeclarePairedDelimiter\absv{\lvert}{\rvert}
\DeclarePairedDelimiter\norm{\lVert}{\rVert}
\newtheorem{theorem}{Theorem}
\newtheorem{proposition}{Proposition}[section]
\newtheorem{lemma}[proposition]{Lemma}
\newtheorem{corollary}[proposition]{Corollary}
\theoremstyle{remark}
\newtheorem{remark}[proposition]{Remark}
\theoremstyle{definition}
\begin{document}

\title[Hpt. eq. of nearby Lagrangians and the Serre spectral sequence]{Homotopy equivalence of nearby Lagrangians and the Serre spectral sequence}
\author{Thomas Kragh}
\maketitle

\begin{abstract}
  We construct using relatively basic techniques a spectral sequence for exact Lagrangians in cotangent bundles similar to the one constructed by Fukaya, Seidel, and Smith. That spectral sequence was used to prove that exact relative spin Lagrangians in simply connected cotangent bundles with vanishing Maslov class are homology equivalent to the base (a similar result was also obtained by Nadler). The ideas in that paper were extended by Abouzaid who proved that vanishing Maslov class alone implies homotopy equivalence.

  In this paper we present a short proof of the fact that any exact Lagrangian with vanishing Maslov class is homology equivalent to the base and that the induced map on fundamental groups is an isomorphism. When the fundamental group of the base is pro-finite this implies homotopy equivalence.
\end{abstract}

\section{Introduction} \label{sec:introduction}

Let $L\subset T^*N$ be an exact Lagrangian embedding with $L$ and $N$ closed (compact without boundary). We will always assume that $N$ is connected, but for generality we will not assume that $L$ is connected. In \cite{FSS}, Fukaya, Seidel, and Smith constructed a spectral sequence converging to the Lagrangian intersection Floer homology of $L$ with itself, and used this to prove that exact relative spin Lagrangians in simply connected cotangent bundles with vanishing Maslov class are homology equivalent to the base (a similar result was simultaneously obtained by Nadler in \cite{Nadler}). This was extended by Abouzaid in \cite{Abou1} to prove that vanishing Maslov class implies homotopy equivalence (combined with the result in \cite{MySympfib} this actually proves homotopy equivalence for all exact Lagrangians). These approaches are rather technical and the goal of this paper is to prove a slightly weaker version in a much simpler way. To be precise we reprove the following theorem.

\begin{theorem}
  \label{thm:1}
  If $L\subset T^*N$ is a closed exact Lagrangian submanifold with vanishing Maslov class, then the map $L \to N$ is a homology equivalence and induces an isomorphism of fundamental groups.
\end{theorem}

\begin{remark}
  Note that, the theorem implies (by applying it to finite covers) that if the fundamental group of $N$ is pro-finite then $L\to N$ is a homotopy equivalence.
\end{remark}

We will prove the theorem by constructing a spectral sequence similar to the one used by Fukaya, Seidel, and Smith. We will construct this for any exact Lagrangian $L$ with any local coefficient system of vector spaces over some field $\F$ (and with a relative pin structure when needed).

The construction of this spectral sequence goes as follows. We start with a Morse function (with some restrictions that we will not write out here) $g:N \to \R$ and consider two large scale perturbations of $L$ given by
\begin{align*}
  K_t &= t L \\
  L_t &= t L + dg
\end{align*}
for very small $t>0$. So $K_t$ is a scaling of $L$ by a very small constant making it very close to the zero section, and $L_t$ is the same but pushed off the zero-section using the Morse function $g$, so that it is close to the graph of $dg$ instead. This is illustrated in Figure~\ref{fig:1a} close to a critical point $q$ of $g$.
\begin{figure}[ht]
  \centering
  \begin{tikzpicture}
    \draw[->] (-2,0) -- (2,0) node [right] {$N$};
    \draw[->] (0,-1.4,0) -- (0,1.7) node [below left] {$T^*_{q}N$};
    \fill (0,0) circle (1pt) node [below right] {$q$};
    \draw plot [smooth] coordinates {(-2,1.1) (0.8,0.7) (-0.8,-0.5) (2,-0.9)};
    \draw (0.8,0.7) node [above right] {$L$};
  \end{tikzpicture}
  \begin{tikzpicture}
    \draw[->] (-2,0) -- (2,0) node [right] {$N$};
    \draw[->] (0,-1.4,0) -- (0,1.7) node [below left] {$T^*_{q}N$};
    \draw (0,0) node [below right] {$q$};
    \draw plot [smooth] coordinates {(-1.5,1.59) (0.8,-0.73) (-0.8,0.75) (1.5,-1.57)};
    \draw plot [smooth] coordinates {(-2,0.11) (0.8,0.07) (-0.8,-0.05) (2,-0.09)};
    \draw (-0.8,0.7) node [above right] {$L_t$};
    \draw (-0.8,0) node [below] {$K_t$};
  \end{tikzpicture}
  \caption{Intersections of $K_t$ and $L_t$ close to a critical point $q$ of $g$.}
  \label{fig:1a}
\end{figure}
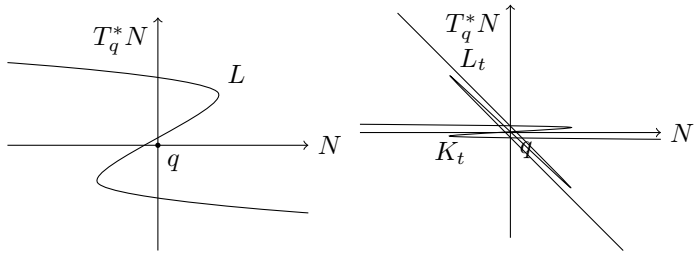
As the figure illustrates all the intersection points of the two Lagrangians will ``bunch'' around the critical points of $g$. Each of the intersections points in the bunch close to $q$ will have action close to the critical value $g(q)$ (up to an overall shift that we thus fix). For small $t$ we now consider an action filtration such that we have a non-trivial filtration level for each critical \emph{value} of $g$ and it contains all the intersection points in all bunches with action value close to this critical value. So, each filtration level contains an unspecified number of these bunches.

The main technical part of the construction is carried out in Section~\ref{sec:fiber-wise-self}. There we basically prove that each of the bunches on the same filtration level do not interact (with respect to the differential), and that restricting the differential to any bunch is well-defined and that this always produces the same homology groups - up to a shift by the Morse index of the associated critical point of $g$. In fact, we will use this ``bunching'' construction to create a local system on $N$. This we will use in Section~\ref{sec:spectral-sequence} to prove that, for $g$ self-indexing, page two of the associated spectral sequence looks a lot like a Serre spectral sequence. In fact we will identify page 1 as the Morse homology complex of $g$ with coefficients in the local system defined in Section~\ref{sec:fiber-wise-self}. 

The original spectral sequence by Fukaya, Seidel, and Smith did not look as much as a Serre spectral as the one in this paper - so we now explain the difference. Consider the following two filtrations defined for a fibration $\pi : E\to X$ where $X$ is a finite cell complex by:
\begin{itemize}
\item $\pi^{-1}(X_0) \subset \cdots \subset \pi^{-1}(X_j) \subset \pi^{-1}(X_{j+1}) \subset \cdots \subset E$, where $X_j$ denotes the $j$-skeleton. This defines the Serre spectral sequence (see Hatcher~\cite{HatchSpec}).
\item Similar, except $X_j$ is not the $j$-skeleton, but $X_{j+1}$ is $X_j$ with a single new cell. This leads to a spectral sequence analogous to the one by Fukaya, Seidel, and Smith with a filtration level per cell - not necessarily ordered by dimension.
\end{itemize}
In a Morse theoretic construction this corresponds to the two cases:
\begin{itemize}
\item Self-indexing Morse function - where the critical value equals the Morse index.
\item Any Morse function with distinct critical value for each critical point.
\end{itemize}
The relation between these two viewpoints (and its analogy to the bunching of critical points) can be described as follows. Assume $E\to X$ is a fiber-bundle of closed manifolds. Let $f$ be a Morse function on $X$. This makes $f'=(f\circ \pi) : E \to \R$ a Morse-Bott function. Perturbing $f'$ slightly to make it Morse we get bunches of critical points each close to one of the original critical fibers, and we may define a filtration on the Morse complex of $f'$ by using a sequence of values intertwining the original critical values of $f$ in such a way that all the bunches associated to the same original critical value are in the same filtration level. By standard perturbation arguments the individual bunches close to fibers over critical points with the same critical value do not interact in the differential, and the local system produced by a construction similar to that in Section~\ref{sec:fiber-wise-self} will simply be the homologies of the fibers.

The final piece to proving Theorem~\ref{thm:1} is essentially to establish a version of Poincar\'e duality fiber-wise. A heuristic description of why this might be a useful property is as follows; the fibers represent the relative difference $L \to N$. However if the fibers also behave as a manifold - then this is homologically supposed to look like a fiber-bundle, and since $L$ and $N$ have the same dimension the fiber basically (homologically) has to be a $0$ dimensional manifold. A similar argument was used in the simply connected case by Fukaya, Seidel, and Smith.

The general layout of the paper is as follows. In section~\ref{sec:inters-floer-homol} we briefly describe Lagrangian intersection Floer homology of two exact Lagrangians $K$ and $L$ and how to apply local coefficients. In Section~\ref{sec:fiber-wise-self} we define the fiber-wise (over $N$) intersection Floer homology of any Lagrangian $L$ with itself using the idea of the bunches described above. We also prove that this fiber-wise Floer homology defines a graded local system on $N$, and satisfies other natural properties that we will need - most importantly the Poincaré duality mentioned above. In Section~\ref{sec:spectral-sequence} we use action filtrations to construct the spectral sequence as described above, converging to the full Lagrangian intersection Floer homology; and we also identify page 1 of this spectral sequence for special cases of $g$. In Section~\ref{sec:local-syst-univ} we extend the type of local coefficient systems we allow to include local systems on the universal cover of $N$. The reader only interested in the case where $N$ is simply connected can skip this section. Then in Section~\ref{sec:consequences} we prove Theorem~\ref{thm:1} starting with the simply connected case not requiring Section~\ref{sec:local-syst-univ}.

It should be noted that the ideas used in this construction are similar to the original ideas behind the spectral sequence constructed by Fukaya, Seidel, and Smith.

\begin{remark}
  In the paper \cite{sequel} with Abouzaid we use the same large scale perturbations of $L$ above together with some additional structure to prove the new result that any exact Lagrangian is in fact simple homotopy equivalent to the base.
\end{remark}


\subsection*{Acknowledgments}
  I would like to thank Tobias Ekholm for many insightful discussions on the topic. I would also like to thank the anonymous referee and Maksim Maydanskiy for suggestions which led to a much better exposition of the material.
\section{Lagrangian Intersection Floer homology and local coefficients} \label{sec:inters-floer-homol}

In \cite{MR965228}, Floer introduced the Lagrangian intersection Floer homology $HF_*(K,L)$; and proved that it is a Hamiltonian isotopy invariant. In this section we recall this construction for two exact Lagrangians $K,L \subset T^*N$. This also serves to fix some conventions regarding signs, gradings and orientations. We will consider some ground field $\F$. However, we will consider any local coefficient systems of vector spaces over $\F$ defined on $K$ or $L$, and describe (Corollary~\ref{cor:Grading:1}) the generalization of Floer's result that
\begin{align} \label{eq:3}
  HF_*(L,L;\F) \cong H_*(L;\F).
\end{align}
to such local coefficient system. Formally we consider the local systems on $K$ and $L$ to be non-graded or, equivalently, graded in degree 0.

The reader only interested in the case of both $L$ and $N$ simply connected can ignore the local coefficients in this section. However, we note that we still need to specifically identify a certain differential in the spectral sequence in Proposition~\ref{prop:Spectral:1}, which means we need to understand the fiber-wise Floer homology defined in the next section as a graded local system on the base $N$. So, one cannot avoid local coefficients in this argument, and hence it does not simplify matters much to ignore them here.

Let $N$ be any closed (compact without boundary) manifold. The canonical 1-form (or Liouville form) $\lambda \in \Omega^1(T^*N)$ on the cotangent bundle is defined by
\begin{align*}
  \lambda_{q,p}(v) = p(\pi_*(v)), \qquad q\in N,\, p\in T^*_qN,\, \pi :T^*N \to N.
\end{align*}
The canonical symplectic form is then given by $\omega = -d\lambda$. Pick a Riemannian structure on $N$, then we get an induced almost complex structure $J$ on $T^*N$ (which is compatible with the symplectic structure). This canonically identifies $T_{(q,p)}T^*N \cong \C \otimes T_qN$, where the real part is horizontal and the imaginary part is vertical.

For such a $J$ there is a canonical map from the space of linear Lagrangians subspaces $V \subset T_{q,p}(T^*N)$ to $S^1$ given by the square determinant. Indeed, pick any orthonormal basis for $T_qN$ then this represents a basis of the horizontal Lagrangian at $T_{q,p}(T^*N)$. Now also pick an orthonormal basis for $V$. The complex unitary linear map changing from the first basis to the second describes a unique element in $U(n)$ of which we can take the square determinant. This is independent on the choice of both bases since it is invariant under both actions by $O(n)$.

This is smooth in $V$ and $(q,p)$, and thus it induces a smooth map from any Lagrangian submanifold $L\subset T^*N$ to $S^1$, by sending $z\in L$ to the number defined by $T_zL$. The induced map on $\pi_1$ or $H_1$ is known as the Maslov class. For a Lagrangian submanifold $L\subset T^*N$ with vanishing Maslov class a \emph{grading} $\psi$ (defined in \cite{MR1765826}) is a lift $\psi : L\to \R$ of the map $L\to S^1 \cong \R/\Z$ defined above. From now on we assume that $K$ and $L$ are two exact Lagrangians with vanishing Maslov classes and gradings $\psi_K$ and $\psi_L$. Notice that when one has an isotopy of Lagrangians $L_t,t\in I$ then a grading on $L_0$ ``parallel transports'' to a unique grading on each $L_1$.

\begin{remark}
  \label{rem:Grading:1}
  Everything in this paper except Section~\ref{sec:consequences} can be carried out in the general case (with modified grading), but we assume vanishing Maslov classes already here to make the exposition more clear.
\end{remark}

Now let $z\in K\cap L$ be a transverse intersection point of $K$ and $L$. Since the space of linear Lagrangians in $T_zT^*N$ which are transverse to $T_zK$ is contractible there is a path unique up to homotopy in this space from $T_zL$ to $J(T_zK)$. This path lifts using the determinant construction above to a path from $\psi_L(z)$ to some other real number $a$. We now define the grading of $z$ (dependent on the order $K$ before $L$) by the formula
\begin{align*}
  \deg_{(K,L)}(z) = \psi_K(z)-a+\tfrac{n}{2}.
\end{align*}
This is an integer since $\det(JA)^2=(-1)^n\det(A)^2$, and each $(-1)$ represents a half turn around $S^1$. Notice that with this convention it is an easy exercise to see that pushing the zero-section $N$ off itself using a Morse function $g$ makes the intersection points between $N$ and $dg$ (in that order) have grading equal to the Morse index of $g$ (here the grading on $dg$ is induced from $N$ by the obvious isotopy). This grading also satisfies:
\begin{align} \label{eq:14}
  \deg_{(K,L)}(z) = n-\deg_{(L,K)}(z).
\end{align}
However, when the order is implied from context we will simply write $\deg(z)$.

Now assume that we have two transverse intersection points $z_{-1},z_1\in K\cap L$. Let $\H$ be the upper half plane in $\C$ and consider the space $\chi(z_{-1},z_1)$ of maps $u:D^2\to T^*N$ such that $u$
\begin{itemize}
\item maps $\pm 1$ to $z_{\pm 1}$,
\item maps the lower edge to $K$ -- i.e. $u(S^1\cap \overline{\H}) \subset K$, and
\item maps the upper edge to $L$ -- i.e. $u(S^1\cap \H) \subset L$.
\end{itemize}
and let $\Ms(z_{-1},z_1)\subset \chi(z_{-1},z_1)$ be the subspace of pseudo-holomorphic maps. For generic $J$ (usually achieved by a small perturbation) this subspace is a manifold of dimension $\deg(z_1)-\deg(z_{-1})$. Indeed, this is the Fredholm index of the linearization of the $\overline{\partial}$ operator.

The space $\Ms(z_{-1},z_1)$ has an $\R$ action (symmetries of holomorphic disc with two marked points on the boundary), which when the map is non-constant is free. So in the case where $\deg(z_1)-\deg(z_{-1})=1$ the quotient $\Ms(z_{-1},z_1)/\R$ is for generic $J$ a manifold of dimension $0$ - we refer to these points as rigid discs. For $K$ transverse to $L$ and generic $J$ (which we assume for the rest of this section) Floer defined the chain complex:
\begin{align*}
  CF_*(K,L;\F_2) = (\F_2[K\cap L],\partial)
\end{align*}
Here 
\begin{itemize}
\item $\F_2=\Z/2$, but we will describe more general coefficients later,
\item the grading is given by $\deg_{(K,L)}$, and
\item $\partial$ counts the number of rigid discs between the intersection points going down in degree - I.e. $\deg(z_{1})=\deg(z_{-1})+1$.
\end{itemize}
By the assumptions the space $\Ms(z_{-1},z_{1})/\R$ when $\deg(z_{1})-\deg(z_{-1})=2$ is a 1-manifold. A version of Gromov compactness and gluing of discs shows that it can be compactified to a manifold with boundary by adding the boundary:
\begin{align*}
  \smashoperator{\bigsqcup_{z\in K\cap L,\deg(z)=\deg(z_{1})-1}} \Ms(z_{1},z)\times\Ms(z,z_{-1}),
\end{align*}
which is a complete analogue of the Morse homology complex situation when gluing gradient trajectories. Indeed, the boundary structure is given by gluing discs together in a similar fashion. This is thus used to prove that $\partial^2=0$ - as in Morse homology. Floer also proved that this homology is invariant under Hamiltonian isotopy of either $K$ or $L$.

Now consider any local coefficient system $C$ of $\F_2$ vector spaces on $K$ (or $L$), and define the chain complex 
\begin{align*}
  CF_*(K,L;C) = (\smashoperator[r]{\bigoplus_{z\in K\cap L}} C_{z},\partial).
\end{align*}
Here the differential is again defined by counting the rigid discs, but using the parallel transport in the local system along the boundary path of the disc in $K$ (or $L$). The proof that $\partial^2=0$ easily extends to this case since the boundary path in $K$ of any glued disc is up to homotopy given by the concatenations. It should be noted that this works even when the local system is infinite dimensional, which we will make use of in Section~\ref{sec:local-syst-univ} and Section~\ref{sec:consequences}. This was first observed by Damian in \cite{MR2914855} and Abouzaid in \cite{Abou1}.

When $K$ and $L$ are not transverse one defines this by perturbing one of them by a Hamiltonian flow. Floer proved that if $K=L$ with the same grading then
\begin{align} \label{eq:2}
  HF_*(L,L;\F_2) \simeq H_*(L;\F_2).
\end{align}
In fact, by using a $C^2$ small Morse function $g : L \to \R$ to push $L$ off itself  (and changing $J$) Floer proved that
\begin{align*}
  CF_*(L,L;\F_2) \cong CM_*(L;\F_2).
\end{align*}
Here $CM_*$ denotes Morse complex of $g$. Floer proved this by proving that the gradient trajectories of $g$ are in bijective correspondence with the (now very narrow) pseudo-holomorphic discs with both boundaries equal to the gradient trajectory, and as we saw above the degree matches the Morse index. Since this also explicitly describes the boundaries of the discs involved in the differential we conclude that for a local system this proof extends to proving that
\begin{align*}
  CF_*(L,L;C) \cong CM_*(L;C),
\end{align*}
when $C$ is a local system on either of the two copies of $L$, implying that
\begin{align*}
  HF_*(L,L;C) \cong H_*(L;C).
\end{align*}

To define the intersection Floer homology with coefficients not $2$ torsion (local or not) one needs to count the rigid discs with signs, and to do this one needs that the Fredholm index bundle (which leads to the above discussed Fredholm index) has a trivialization of its determinant line bundle (as discussed in \cite{MR1200162}) which is compatible with gluing. To achieve this we need to choose relative pin structures on $K$ and $L$ (see e.g. \cite{MR2441780} or \cite{MR2553465}). We will use the conventions from \cite{MR2441780} and define
\begin{align*}
  CF_*(L,L;C) = (\smashoperator[r]{\bigoplus_{z\in K\cap L}} \absv{o_z} \otimes C_{z},\partial),
\end{align*}
where $\absv{o_z}$ denotes the infinite cyclic group generated by the orientations (representing generators with opposite signs) of a certain line $o_z$ as defined in \cite{MR2441780} (sections 12b and 12f). We note that in the case of $N$ and $dg$ we can canonically identify the two possible generators of $\absv{o_z}$ with orientations on the negative eigen-space of the Hessian of $g$. This is a key ingredient in defining the signs in Morse homology away from characteristic 2. The pin structures allow us to associate a canonical isomorphism
\begin{align*}
  o_{z_{-1}} \cong o_{z_1} \qquad \Ra \qquad  \absv{o_{z_{-1}}} \cong \absv{o_{z_1}}
\end{align*}
to each rigid disc as above. To define the differential $\partial$ we now sum the latter maps tensored with the induced maps on the local system $C$ from before. The relative pin structures also allow us to associate a compatible orientation on the 1-manifolds in the proof of $\partial^2=0$, which means that that proof extends to this case. Even the Hamiltonian invariance generalizes.

\begin{remark} \label{rem:Grading:2}
  Note that the sign conventions in \cite{MR2441780} are such that if one changes the grading of a Lagrangian by adding 1 to the lift then all the signs on the differentials change, which means that by $C[1]$ we will mean the shift of the chain complex $C$ with the negative differential.
\end{remark}

Floer's proof extends to signs (given the same relative pin structure on both copies of $L$) in the sense that the signs equal the signs in the Morse complex. So, his proof immediately generalizes to show the following corollary. 

\begin{corollary}
  \label{cor:Grading:1}
  Let $C$ be a local system on $L$. If $C$ is 2-torsion \emph{or} $L$ is equipped with a relative pin structure then 
  \begin{align}
    \label{eq:4}
    HF_*(L,L;C) \cong H_*(L;C).
  \end{align}
\end{corollary}


\section{Fiber-wise intersection Floer homology} \label{sec:fiber-wise-self}

\newcommand{\oL}{{\widetilde{L}}}

Let $q\in N$ be any point, and let $V^m\subset T_qN$ be an $m$-dimensional linear subspace. In this section we define the fiber-wise self-intersection Floer homology
\begin{align*}
  HF_*(L,q,V^m;C)
\end{align*}
of a graded exact Lagrangian $L$ (with relative pin structure when necessary). Here $C$ is a field $\F$ or more generally a local coefficient system of $\F$-vector spaces on $L$ (potentially infinite dimensional). Initially this fiber-wise Floer homology will depend on a lot of other choices, of which the most important is a function $g$ with $q$ as a Morse critical point with unstable manifold tangent to $V^m$. We then prove that these fiber-wise intersection Floer homology groups are independent of the auxiliary choices and satisfy the following properties, which we will need in the proof of Theorem~\ref{thm:1}.
\begin{itemize}
\item Invariance: $HF_*(L,\bullet,-;C)$ canonically defines a graded local system on the Grassmann bundle of choices $(q,V^m)$.
\item Morse shifting: $HF_*(L,q,V^m;C) \cong HF_{*+m}(L,q,0;C)$ (sign dependent on a choice of an orientation of $V^m$).
\item Poincare duality: $HF_*(L,q,V^m;C^\dagger) \cong HF_{n-*}(L,q,(V^m)^\perp;C)^\dagger$.
\end{itemize}
Here the latter $(-)^\dagger$ is vector space dual. However, $C^\dagger$ denotes the dual local system, which is defined by taking the fiber-wise dual and tensoring with the rank 1 local system associated to local orientations of $L$. This latter local system is trivial if $L$ is orientable with respect to $\F$.
\begin{remark}
  It is a consequence of vanishing Maslov class that the contribution of the orientation line (or dualizing sheaf) of $L$ over a point $q\in N$ is trivial. This implies that for this version of Poincare duality we actually do not need to tensor with this orientation line. However, to avoid a lengthy sign discussion we simply state it as above and refer to \cite{MR2441780} for the signs.
\end{remark}

In Section~\ref{sec:inters-floer-homol} we fixed a Riemannian structure on $N$ inducing an almost complex structure $J$ on $T^*N$. Let
\begin{align} \label{eq:8}
  g : N \to \R
\end{align}
be a smooth function which has $q$ as a non-degenerate critical point, and whose Hessian has negative eigenspace equal to $V^m$. This is easily constructed using a normal neighborhood of $q$, and it is a contractible choice.

Define
\begin{align*}
  K_t &= t L \\
  L_t &= t L + dg,
\end{align*}
which we will consider for very small $t>0$. Here $dg : N \to T^*N$ is a Lagrangian, but $+dg$ means that we shift a point $v\in T^*N$ to $v +dg(\pi(v))$. This is the same as the Hamiltonian time 1 flow using the Hamiltonian $ (g\circ \pi) : T^*N \to \R$. So, $K_t$ and $L_t$ are both Hamiltonian isotopic to $L$. Fix a primitive $f^L : L \to \R$ for the restrictions of $\lambda$, then
\begin{align*}
  f^{K_t}(z) &= t f^L(t^{-1} z) \\
  f^{L_t}(z) &= t f^L(t^{-1} (z-dg_{\pi(z)})) + g(\pi(z)) 
\end{align*}
will be used as primitives for $\lambda$ on $K_t$ and $L_t$ respectively.

Using the canonical identification we can transport $C$ and the gradings to corresponding structures on $K_t$ and $L_t$. The intersection Floer homology with these structures can be defined as in Section~\ref{sec:inters-floer-homol}. However, for small $t$ we get that the intersections of $K_t$ and $L_t$ are close to critical points of $g$. Indeed, $K_t$ is close to the zero-section and $L_t$ is close to $dg$ so only when $dg$ is close to $0$ do they intersect (see Figure~\ref{fig:1a}). For small $t>0$ we will call the intersection points close to $q$ the \emph{bunch} of intersection points associated to $q$.

The action of an intersection point $z\in K_t\cap L_t$ is given by the difference of the primitives:
\begin{align} \label{eq:10}
  f^{K_t}(z)-f^{L_t}(z) = t(f^L(z_+)-f^L(z_-)) + g(\pi(z)),
\end{align}
where $z_- = t^{-1}z \in K$ and $z_+ \in L$ is the solution to $tz_++dg = z$. Since $f^L$ is bounded this means that the critical action values will for small $t$ cluster around the critical values of $g$. More importantly, the action values of the bunch associated to $q$ cluster around $g(q)$. This means that the action interval of the bunch is very narrow, and the following lemma will be used to argue that restricting the Floer chain complex to only include the intersection points in this bunch gives a well-defined complex for small $t$. However, we formulate it using any function $f$ with any isolated singularity at $q$ since we will need this later.

\begin{lemma}
  \label{lem:Fiberwise:1}
  Let $f:N \to \R$ be any function such that $q$ is the only critical point in the closure of the ball $B_R(q)$. Then there exist a $\delta>0$ and an $a>0$ such that: if $u:D^2\to T^*N$ is a pseudo-holomorphic disc satisfying:
  \begin{itemize}
  \item Precisely one of the two points $u(\pm 1)$ is in the cotangent ball $T^*B_R(q)$.
  \item The maximal distance of the upper boundary of $u$ to $df$ is $\delta$.
  \item The maximal distance of the lower boundary of $u$ to the zero-section is $\delta$.
  \end{itemize}
  then the symplectic area of $u$ is larger than $a$. Furthermore, neither of the points $u(\pm 1)$ is in the set
  $$T^*(B_R(q)-B_{R/2}(q)).$$
\end{lemma}

\begin{proof}
  The assumptions imply that for small $\delta>0$ the one point of $u(\pm 1)$ that lies inside $T^*B_R(q)$ is in fact inside $T^*B_{R/2}(q)$. Indeed, there is a positive distance from the closed annulus $\overline{B_R(q)-B_{R/2}(q)} \subset N \subset T^*N$ to $df$. So we may choose $\delta$ to be smaller than half this distance.

  Consider the co-dimension 2 sub-manifold $W\subset T^*B_R(q)$ given by 
  \begin{align} \label{eq:1}
    W = \{(q',p')\in T^*N \mid \dist(q',q) = \epsilon, \norm{p'}=\tfrac12 \norm{d_{q'}f} \}.
  \end{align}
  for $R/2 <\epsilon <R$. The manifold $W$ is compact without boundary, and it is disjoint from $df$ and the zero section. By the above choice of $\delta$ it is also disjoint from $u(S^1)$ when $u$ is as described in the lemma. In fact, the upper part of the boundary can only pass over points of $W$ (here ``over'' means with larger $\norm{p}$ value) and the lower part of the boundary passes under. Hence the assumptions imply that the $\Z/2$ algebraic intersection of $u$ and $W$ is $1$.

  It follows by standard monotonicity (Lemma~\ref{lem:Fiberwise:6}) that $u$ has area at least $a$ for some small $a>0$.
\end{proof}

\begin{lemma}
  \label{lem:Fiberwise:6}
  Let $M$ be any open symplectic manifold with a compatible almost complex structure $J$. Then for any compact subset $C\subset M$ and an open neighborhood $U$ around $C$ there is a lower bound on the area of any non-constant connected pseudo-holomorphic curve passing through $C$ defined on an open domain and with proper image in $U$.
\end{lemma}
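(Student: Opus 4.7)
The plan is to reduce the global statement to the classical pointwise monotonicity inequality for pseudo-holomorphic curves, and then to patch the pointwise constants into a uniform one using compactness of $C$. First I would fix an auxiliary Riemannian metric on $M$; since $C$ is compact and $U$ is open, there exists $r_0 > 0$ such that $\overline{B_{r_0}(p)} \subset U$ for every $p \in C$. The hypothesis that the curve $u$ has closed image in $U$ guarantees that for any $p \in C$ and $r \leq r_0$ the preimage $u^{-1}(B_r(p))$ has its topological boundary contained in $u^{-1}(\partial B_r(p))$, so the relevant piece of the curve is ``properly embedded'' in each such ball.

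The core step is the pointwise monotonicity lemma: at any $p \in M$, choose local coordinates in which $J(p) = J_0$ and $\omega(p)$ is the standard form, and pick a primitive $\lambda$ of $\omega$ on a small ball with $\absv{\lambda} \leq K\, r$ on $B_r(p)$ for some $K>0$ (e.g.\ the radial primitive). For a non-constant $J$-holomorphic $u$ through $p$ with the above properness, define
\[
  a(r) = \int_{u^{-1}(B_r(p))} u^*\omega,
\]
which equals $\area(u^{-1}(B_r(p)))$ by $J$-compatibility (the Wirtinger identity). Stokes' theorem gives $a(r) = \int_{u^{-1}(\partial B_r(p))} u^*\lambda \leq K\, r \cdot \mathrm{length}(u^{-1}(\partial B_r(p)))$, while the coarea formula gives $a'(r) \geq \mathrm{length}(u^{-1}(\partial B_r(p)))$ for almost every $r$. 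Combining these yields the differential inequality $a(r) \leq K\, r \cdot a'(r)$, and integrating it against the first-order fact that $u$ is complex linear at $p$ (so that $a(r) \sim \pi r^2$ as $r \to 0$) produces a quadratic lower bound $a(r) \geq c\, r^2$ valid for all $r$ below some threshold $r_p$.

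By continuity of $J$ and $\omega$, the constants $r_p$ and $c$ depend continuously on $p$, so compactness of $C$ lets me replace them by uniform values $r_1, c > 0$ that work for every $p \in C$. For any non-constant connected pseudo-holomorphic curve with closed image in $U$ passing through some $p \in C$, applying the pointwise statement at this $p$ with $r = \min(r_0, r_1)$ gives the area lower bound $\area(u) \geq \delta$, where $\delta := c \cdot \min(r_0, r_1)^2$.

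The main obstacle is establishing the differential inequality $a(r) \leq K\, r \cdot a'(r)$: it requires both the pointwise bound on the primitive of $\omega$ near $p$ and the coarea-length estimate on the slices $u^{-1}(\partial B_r(p))$, and it is where pseudo-holomorphicity (rather than mere smoothness) enters essentially, through the identity $a(r) = \area(u^{-1}(B_r(p)))$. Everything after this inequality is routine symplectic bookkeeping together with the compactness argument above.
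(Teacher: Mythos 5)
The paper's own ``proof'' is a one-line citation to Gromov's paper \cite{MR809718}; it does not spell out the argument. Your proposal reconstructs the standard monotonicity argument, which is the right thing to do and is essentially Gromov's proof, so in spirit this matches the paper. That said, two places in your sketch need tightening before the argument actually closes.

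First, the constant $K$ in the bound $\absv{\lambda}\leq Kr$ cannot be arbitrary. Integrating the differential inequality $a(r)\leq K\,r\,a'(r)$ gives $a(r)\geq a(r_1)(r/r_1)^{1/K}$ for $r_1<r$, and if $1/K<2$ this degenerates to $0$ as $r_1\to 0$ because $a(r_1)\sim \pi r_1^2$. The argument needs $K=\tfrac12(1+O(r))$, which you get by taking $\lambda$ to be the radial primitive $\tfrac12\iota_{\rho\partial_\rho}\omega$ in Darboux coordinates adapted to $J(p)$; with that choice the inequality becomes $a'/a\geq 2/r - C$, and $a(r)r^{-2}e^{Cr}$ is nondecreasing, giving $a(r)\geq \pi r^2 e^{-Cr}$.

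Second, the length appearing in the Stokes estimate is the length of the \emph{image} curve $u\bigl(\partial u^{-1}(B_r(p))\bigr)$ in $M$, while the coarea formula controls $a'(r)$ by an integral over the level set $u^{-1}(\partial B_r(p))$ in the domain weighted by $\absv{du}$-type factors; you have written ``$\mathrm{length}(u^{-1}(\partial B_r(p)))$'' and the comparison between these is not automatic. Matching them up is precisely the second place where pseudo-holomorphicity (conformality of $u$) is used, in addition to the Wirtinger identity you already invoke. Relatedly, the metric should be taken to be $g_J=\omega(\cdot,J\cdot)$ rather than an arbitrary auxiliary one, so that the Wirtinger identity $\int u^*\omega = \area$ holds on the nose; with an auxiliary metric you would carry uniform comparison constants through all the estimates, which works on a compact set but clutters the argument. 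Finally, ``$u$ is complex linear at $p$'' should be weakened to: the leading order of $a(r)$ at a (possibly critical) point of $u$ mapping to $p$ is $k\pi r^2$ with $k\geq 1$, which is all the lower bound needs.

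The overall strategy — pointwise monotonicity via a differential inequality, then uniformity over $C$ by compactness — is correct and is what the cited reference establishes.
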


\begin{proof}
  This was proven (but not phrased like this) in \cite{MR809718}.
\end{proof}

Now let $a>0$ and $\delta>0$ be as in Lemma~\ref{lem:Fiberwise:1} for $g$ with some $R>0$ isolating $q$ from other critical points. We now use these to define the fiber-wise intersection Floer homology. Firstly, pick $t$ so small that any disc $u$ with upper boundary on $L_t$ and lower boundary on $K_t$ satisfies the $\delta$ distance bound in the lemma, but also such that the critical actions of the bunch associated to $q$ lies in the interval $[g(q)-a/4,g(q)+a/4]$. Then define the fiber-wise intersection Floer homology:
\begin{align*}
  CF_*(L,q,V^m;C) = &CF_*(L,q,V^m,g,t,H,J';C) = (\smashoperator{\bigoplus_{z\in K_t\cap L_t'\cap T^*B_{R/2}(q)}} \absv{o_{z}}\otimes C_{z},\partial_{\mid}).
\end{align*}
Here $L_t'$ is a small Hamiltonian perturbation of $L_t$ using a $C^2$ small Hamiltonian $H$, and $J'$ is a generic small perturbation of $J$. The differential $\partial_{\mid}$ is the restriction of the differential discussed in Section~\ref{sec:inters-floer-homol}.

\begin{lemma}\label{lem:Fiberwise:7}
  The fiber-wise intersection Floer homology is well-defined and independent of the choices up to a chain homotopy equivalence, which is unique up to chain homotopy.
\end{lemma}

\begin{proof}
  Initially we consider $g$ as fixed. For small enough Hamiltonian perturbation we can assume that all the intersection points of $K_t$ and $L_t'$ in the bunch have action in $(g(q)-a/3,g(q)+a/3)$ and that $L_t'$ also lies $\delta$ close to $dg$. Lemma~\ref{lem:Fiberwise:1} was used in the definition above for the original $J$. However, for small perturbations of $J$ we can assume that any pseudo-holomorphic disc with precisely one of the points $u(\pm 1)$ in the bunch and boundaries on $K_t$ and $L_t'$ has symplectic area larger than $2a/3$, which is still more than the entire interval of critical action spanned by these critical points - hence there are no interactions from outside the bunch. More concisely, the usual proof that $\partial_{\mid}^2 = 0$ works unchanged since there can be no breaking on this subset of generators which involves points outside of the bunch.

  It is standard to construct continuation maps for intersection Floer homology using generic paths of perturbation data (see e.g. \cite{MR2441780}). If all the perturbations in the path are small enough the bound in Lemma~\ref{lem:Fiberwise:1} is valid also for the associated continuation map. Hence this map restricts to a chain map on the fiber-wise Floer complexes. Furthermore, since generic homotopies of such paths induce chain homotopies of these continuation maps it follows that for small enough perturbations the continuation maps are chain homotopy equivalences which are unique up to homotopy.

  Now, we consider the choice of $g$ and note that this is a contractible choice, so for any two choices there is a path $g_s,s\in I$ between them, and since changing $g$ slightly changes $K_t$ and $L_t$ by a slight perturbation we can cut $I$ into small pieces and get a sequence of chain homotopy equivalences (each as above for small $t$) relating the two chain complexes. Since the path $g_s$ is unique up to homotopy, we can relate any such two choices by a homotopy of paths, which when cut into pieces can be used to define a chain homotopy between the two sequences of chain homotopy equivalences.
\end{proof}

Let $E^m \to N$ be the Grassmann bundle with fibers $E^m_q$ the $m$ dimensional linear sub-spaces of $T_qN$. Hence $E^0=E^n=N$.

\begin{lemma}
  \label{lem:Fiberwise:2}
  The fiber-wise intersection Floer homology
  \begin{align*}
    HF_*(L,q,V^m;C) = H_*(CF_*(L,q,V^m,g,t,H,J';C))
  \end{align*}
  naturally defines a graded local system on the choices $(q,V^m) \in E^m$.
\end{lemma}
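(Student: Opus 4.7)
The plan is to upgrade the perturbation-invariance that has already been argued into a full local-system structure by: (i) establishing that the homology at a fixed $(q, V^m) \in E^m$ is independent of the chart $\varphi$ and Morse function $g$ up to canonical isomorphism, and (ii) constructing continuation isomorphisms along paths in $E^m$ whose action on homology depends only on the homotopy class of the path.

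For (i), fix $(q, V^m)$ and consider two triples $(\varphi_0, g_0)$ and $(\varphi_1, g_1)$ with each $g_i$ in the normal form \eqref{eq:11}; the paper has already observed that the space of such triples is contractible. Choose a smooth path $(\varphi_s, g_s)$ between them and set $L_{t,s} = tL + dg_s$. For $t$ sufficiently small, a parametrized version of Lemma~\ref{lem:Fiberwise:1} produces uniform constants $\delta > 0$ and $t_0 > 0$ along the compact interval $s \in [0,1]$, because the codimension-two submanifold $W_s$ of \eqref{eq:1} depends continuously on $s$. Hence the standard continuation chain map restricts to a map on the bunched subquotient, and the reverse-continuation argument (also restricted to the bunch) shows it is a quasi-isomorphism. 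Contractibility of the space of such paths rel endpoints makes the induced map on homology canonical.

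For (ii), given a path $\gamma \co [0,1] \to E^m$ with $\gamma(s) = (q_s, V_s^m)$, choose a continuous family $(\varphi_s, g_s)$ adapted to $\gamma(s)$, a single $t > 0$ small enough to work for all $s$ simultaneously (again uniform monotonicity over the compact image of $\gamma$), and continuous families $H_s, J_s'$ making the relevant Floer data regular. The resulting one-parameter family of Lagrangian pairs $(K_t, L_{t,s}')$ produces a continuation chain map whose restriction to the bunched subquotient is a quasi-isomorphism. Homotopy invariance follows from the standard two-parameter trick: a homotopy between paths rel endpoints yields a chain homotopy between the two continuation maps via a two-parameter family of data, with all area and monotonicity bounds uniform over the compact homotopy square. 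Locality on short paths together with homotopy invariance then assembles into the claimed local-system structure on $E^m$.

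The main technical obstacle is purely quantitative: verifying that the constants $\delta$ and $t_0$ of Lemma~\ref{lem:Fiberwise:1} can be chosen uniformly across a compact family of data $(\varphi_s, g_s)$. This reduces to continuous dependence of $W_s$ on $s$ together with compactness of the parameter space, so nothing beyond a parametrized rephrasing of the arguments already established in Lemma~\ref{lem:Fiberwise:1} is required; the rest of the construction is a direct application of standard continuation techniques in Floer theory.
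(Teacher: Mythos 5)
Your proposal is correct and follows essentially the same route as the paper: uniform choices of $\delta$, $t_0$, and $t$ over a compact family of data via a parametrized Lemma~\ref{lem:Fiberwise:1}, continuation maps for small perturbations that stay within the bunch (hence descend to the subquotient), homotopy invariance of these continuation maps, and contractibility of the auxiliary choices $(\varphi,g)$ for fixed $(q,V^m)$. The only cosmetic difference is that you phrase the argument in terms of paths and homotopies of paths in $E^m$, whereas the paper works over arbitrary compact parameter spaces $D$ and then observes that the full space of data deformation retracts onto $E^m$; these amount to the same thing.
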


\begin{proof}
  Let $g_s :N \to \R$ be a smooth family of functions parametrised by $s=(q_s,V^m_s) \in E^m$ such that $g_s(q_s)$ is a Morse critical point with the negative eigenspace of the Hessian equal to $V^m_s$. This can be constructed explicitly using exponential maps and bump functions. By compactness of $E^m$ we can find an $R>0$ such that for each $s$ the critical point of $g_s$ at $q_s$ is unique in the closure of $B_R(q_s)$. By compactness we can find a $\delta>0$ and an $a>0$ as in Lemma~\ref{lem:Fiberwise:1} which works for the entire family $g_s,s\in E^m$ simultaneously, and again we can find $t$ so small that \emph{all} the fiber-wise Floer homologies are well-defined (each after a perturbation). Since changing $s$ slightly perturbs $K_t$ and $L_t$ slightly it follows from Lemma~\ref{lem:Fiberwise:7} that the \emph{homologies} of the complexes 
  \begin{align*}
    CF_*(L,q_s,V_s^m,g_s,t,H,J';C)
  \end{align*}
  are locally defined up to unique isomorphism for $s\in E^m$, and hence defines a local system on $E^m$.
\end{proof}

\begin{lemma}\label{lem:Fiberwise:8}
  Let $v \in  V^m$ be a unit vector let $V^{m-1}\subset V^m$ be the orthogonal complement of $v$. There is a chain homotopy equivalence (after choosing small perturbations)
  \begin{align*}
    CF_*(L,q,V^m;C) \simeq CF_{*-1}(L,q,V^{m-1};C)
  \end{align*}
  unique up to chain homotopy. Moreover, the induced map on homology is continuous for varying $v, V^m$ and $q$ (and hence $V^{m-1}$).
\end{lemma}

\begin{remark}
  Notice here that the continuity and uniqueness of the map on homology is equivalent to: on the space of choices $(q,V^m,v)$ we have two fiber-bundle structures given by projections to $E^m$ (with fiber $S^{m-1}$ - the choice of $v$) and to $E^{m-1}$ (with fiber $S^{n-m}$ - since $v$ is a choice of unit vector in the orthogonal complement of $V^{m-1}$). Now, the construction defines a canonical global isomorphism of local systems between the pull backs of the two local systems of fiber-wise Floer homologies on $E^m$ and $E^{m-1}$ to the common fiber bundle.
\end{remark}

\begin{proof}
  Fix $q\in N$. We will work in a normal coordinate chart around $q$ which identify the derivatives $\tfrac{\partial}{\partial x_i},i=1,\dots,m$ with $V^m$ and such that $\tfrac{\partial}{x_m}$ is mapped to $v$. We will denote the image of the span of the first $m-1$ of these by $V^{m-1}$, which is the orthogonal complement to the span of $v$ inside $V^m$. Let $g_s$ be a family of functions for $s\in (-\epsilon,\epsilon)$ which in these coordinates is given by
  \begin{align*}
    g_s(x) = -x_1^2-\cdots - x_{m-1}^2 +(x_m^2-s)x_m+x_{m+1}^2+\cdots x_n^2.
  \end{align*}
  This has two critical points in the chart when $s>0$ and none when $s<0$. Defining the Lagrangians as above using $g_s$ instead of $g$ and some small $t>0$ provides smooth families $L_t^s$ and $K_t^s$ of Lagrangians.

  For any $\delta>0$ there is an $\epsilon>0$ small so that for $0<t<\epsilon$ and $s\in [-\epsilon,\epsilon]$ all of the Lagrangians $L_t^s$ are within a $\delta$-neighborhood of $dg_0$ and $K_t^s$ within a $\delta$-neighborhood of the zero-section. Hence using Lemma~\ref{lem:Fiberwise:1} on $g_0$ (and some $R>0$) provides an $a>0$ (for our fixed $J$) which we can use for this family of Lagrangian pairs. By making $\epsilon$ even smaller we get that the critical action interval of intersection points of $L_t^s\cap K_t^s$ in $T^*B_R(q)$ is again smaller than $2a/3$, and thus for any such pair $(s,t)$ the Floer homology complex, say $SCF_*$ (``S'' for singularity), of all the intersection points inside $T^*B_R(q)$ is well-defined (using a sufficiently small perturbation). So, as above, this ``singularity Floer homology'', say $SHF_*$, defines a graded local system on the space $(s,t)\in [-\epsilon,\epsilon]\times]0,\epsilon]$.

  For $s=-\epsilon$ and $t$ sufficiently small we see that $L_t^s\cap K_t^s \cap T^*B_R(q) = \varnothing$ and so $SHF_*$ must be the trivial local system.

  For $s=\epsilon$ the function $g_s$ has two Morse-critical points $z_1$ and $z_2$ close to $q$ with $g_s(z_1)<g_s(z_2)$. Since the critical action values will cluster around these values we see that for small $t$ we have a block form differential on $SCF_*$:
  \begin{align*}
    d =
    \begin{pmatrix}
      d^m & F \\
      0 & d^{m-1} 
    \end{pmatrix}
  \end{align*}
  Here $d^m$ and $d^{m-1}$ are the differentials in the fiber-wise Floer homology chain complexes $CF_*(K,L,z_1,V^m;C)$ and $CF_*(K,L,z_2,V^{m-1};C)$ at the points $z_1$ and $z_2$ respectively. By the sign convention discussed in Remark~\ref{rem:Grading:2} it follows that 
  \begin{align} \label{eq:13}
    F_v^m=F : CF_*(L,z_1,V^m;C)[-1] \to CF_*(L,z_2,V^{m-1};C)    
  \end{align}
  is a chain homotopy equivalence. Both points are very close to $q$, so we may replace $z_1$ and $z_2$ with $q$ using continuation maps unique up to chain homotopy. We are using the local chart to identify the two instances of $V^m$ and $V^{m-1}$ here. The last statement in the lemma follows from the perturbation invariance from the previous lemma. Indeed, for any small change in $q,V^m$ and $v$ the perturbation invariance implies that the map induced on homology is locally constant in any local trivializations of the local systems.
\end{proof}

In the above lemma there are essentially two different isomorphisms for fixed $V^{m-1}\subset V^m$ - one for $v$ and one for $-v$. However, when dealing with a birth-death bifurcation, which of these is involved is uniquely determined by how the two critical points cancel. 

\begin{corollary}\label{cor:Fiberwise:2}
  Let $f :N \to \R$ be a function such that $f^{-1}[a,b]$ has precisely two critical points $q_0$ and $q_1$ in its interior. Assume also that these are non-degenerate and that there is a unique gradient trajectory between them so that they cancel in Morse homology. Assume $q_0$ is the one with the lower index and denote by $V^{m-1} \subset T_{q_0}N$ and $V^{m}\subset T_{q_1}N$ the negative eigenspaces of the Hessian of $f$ at the points.

  This data defines a chain homotopy equivalence
  \begin{align*}
    CF_*(L,q_1,V^m;C) \simeq CF_{*-1}(L,q_0,V^{m-1};C)
  \end{align*}  
  which is homotopic to parallel transport (continuation maps) composed with one of the two induced by the above lemma (determined by the cancellation).
\end{corollary}

\begin{proof}
  Define $K_t=tK$ and $L_t=tL+df$ as above but now using $f$. For very small $t$ we can argue as follows. Both of the chain complexes in the corollary are defined as different parts of the standard intersection Floer chain complex of $K_t$ and $L_t$. Considering only those intersection points with action in $[a,b]$ we get a chain complex which precisely contains these two parts. The differential restricted to this complex is again on upper triangular form (as in the proof above):
  \begin{align*}
    d =
    \begin{pmatrix}
      d^m & F \\
      0 & d^{m-1} 
    \end{pmatrix}
  \end{align*}
  Again $d^m$ and $d^{m-1}$ are the differentials in the fiber-wise Floer homology chain complexes $CF_*(K,L,q_0,V^{m-1};C)$ and $CF_*(K,L,q_1,V^{m-1};C)$. By the assumptions we can deform $f$ inside $f^{-1}([a,b])$ through a single birth-death singularity to a situation with no critical points. This also deforms $K_t$ and $L_t$, and we get induced continuation maps from Lemma~\ref{lem:Fiberwise:7}. 
\end{proof}

By picking orientations of the unstable manifolds (corresponds to orientations of $V^m$ and $V^{m-1}$) the sign of the differential in the usual Morse chain complex for $f$ in the above lemma is determined by the direction of $v$ given by the cancellation. Indeed, the sign is given by whether $V^{m-1}\oplus \R[v]=V^m$ is orientation preserving or not.

\begin{lemma}
  \label{lem:Fiberwise:3}
  The fiber-wise intersection Floer homology $HF_*(L,q,V^m;C)$ satisfies Morse shifting. I.e. for fixed $q$ we have
  \begin{align*}
    HF_*(L,q,V^m;C) \cong HF_{*-m}(L,q,0;C)
  \end{align*}
  canonically defined by fixing a choice of orientation on $V^m$. The isomorphisms for the two different orientations differ by a sign.
\end{lemma}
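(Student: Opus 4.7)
The approach is a local comparison combined with a Maslov-index computation, using the invariance of fiber-wise Floer homology established in Lemma~\ref{lem:Fiberwise:2}. By that invariance I may freely choose convenient representatives: fix a single chart $\varphi$ at $q$ and two standard quadratic Morse functions $g^m$ (index $m$ at $q$, unstable manifold tangent to $V^m$) and $g^0$ (local minimum at $q$), arranged to agree outside a small neighborhood of $q$. The fiber-wise Floer complexes on each side are then well-defined local constructions near $q$, insulated from the rest of the Floer complex by the action gap established in Lemma~\ref{lem:Fiberwise:1}.

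To produce the isomorphism I would interpolate $g^m$ to $g^0$ through a generic family $g_s$ ($s\in[0,1]$) of functions, perturbed to remain Morse except at isolated birth--death bifurcations near $q$. Throughout the family, all relevant intersection points have action $\delta/3$-close to $c$, so the ``local'' chain complex remains a well-defined sub/quotient of the full Floer complex, closed under continuation; the standard Floer continuation machinery then gives chain equivalences between the endpoint complexes. Crucially, the Morse--Floer correspondence recalled in equation~\eqref{eq:2} forces the Floer grading at a non-degenerate critical point to coincide with the Morse index (up to a fixed convention), so the net index change of $-m$ along the family $g_s$ translates into a grading shift of $+m$ on the homology.

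The grading shift can be verified directly via the Maslov-rotation recipe of Section~\ref{sec:inters-floer-homol}: at $q$, the tangent space to $dg$ is the graph of $d^2g(q)$, and each of the $m$ negative eigenvalues of $d^2 g^m(q)$ (relative to $d^2 g^0(q)$, which is positive definite) contributes an extra half-turn in the positive rotation from $T_q(\text{zero section})$ to $T_q(dg)$, yielding a $+1$ Maslov-lift per direction, for a total shift of $m$. The dependence on the orientation of $V^m$ enters through the ordering convention for these $m$ half-turns (equivalently, through the Morse-theoretic orientation of the unstable manifold of $q$), and reversing that orientation reverses the sign of the resulting isomorphism.

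The main technical obstacle is giving a rigorous treatment of the birth--death bifurcations in the interpolation $g_s$: one must verify that the local complex remains well-defined and that the continuation maps across each bifurcation are chain equivalences. This is accomplished by localizing the analysis to the narrow action window near $c$ (so that only intersection points in the ``bunch'' near $q$ are ever involved) and appealing to the perturbation-invariance arguments already developed in the proof of Lemma~\ref{lem:Fiberwise:2}, together with the standard Morse-theoretic observation that a birth--death pair of indices $k$ and $k+1$ contributes an acyclic block to the continuation chain map, leaving only the net index change of $m$.
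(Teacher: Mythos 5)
Your overall strategy --- stepping from $V^m$ down to $0$ one dimension at a time through birth--death bifurcations of $g$ --- is the same as the paper's, but the two steps you defer to "standard" machinery are exactly where the content of the lemma lies, and as written they are gaps. The central one is your claim that each birth--death "contributes an acyclic block to the continuation chain map, leaving only the net index change." In this setting a birth--death pair is not a pair of generators but a pair of entire local Floer complexes (one whole bunch of intersection points for each of the two Morse points born from the cubic singularity), and the assertion that their union is acyclic is precisely what has to be proved; it does not follow from finite-dimensional Morse theory, and the perturbation invariance of Lemma~\ref{lem:Fiberwise:2} does not cover it, since that lemma only treats families of \emph{Morse} data, whereas here the family passes through a degenerate moment at which the standard continuation/transversality setup for the local complexes breaks down. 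The paper supplies the missing idea: it extends the fiber-wise construction to the isolated \emph{degenerate} singularity $-x_1^2-\cdots-x_{m-1}^2+x_m^3+x_{m+1}^2+\cdots+x_n^2$ itself, with its own isolating hypersurface $\Wbc$ making the local complex well defined and invariant for this $g$; perturbing in the death direction shows its homology vanishes, while perturbing in the birth direction exhibits the same acyclic complex as the mapping cone of a chain map $CF_*(K,L,q,V^m)\to CF_{*-1}(K,L,q,V^{m-1})$, which is therefore a quasi-isomorphism. This also renders your separate Maslov half-turn computation unnecessary (the shift of one per step is forced by the off-diagonal block of the differential having degree $-1$), and in any case that computation, carried out for the zero section against $\mathrm{graph}(dg)$ at the single point $q$, does not by itself give a uniform shift for the whole bunch of intersection points of general $K_t$ and $L_t$.

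The second gap is the sign statement. You must show (i) that the composite of $m$ one-step isomorphisms depends only on the orientation of $V^m$ and not on the chosen basis or the order in which directions are collapsed, and (ii) that reversing the orientation changes the composite by exactly $-1$. Appealing to "the ordering convention for these $m$ half-turns" establishes neither. The paper obtains (i) from continuity of the construction in the choice of basis (perturbation invariance over the connected components of the space of frames), and proves (ii) by a genuinely separate argument: crossing with $T^*S^2$ and a double birth--death $x_1^3+x_2^3$ produces a diamond of four critical points, and $\partial^2=0$ for the resulting combined complex forces the two compositions around the two sides of the diamond to differ by a sign. Without an argument of this kind the second assertion of the lemma is unproved.
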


\begin{proof}
  Any choice of ordered orthonormal basis $(v_1,\dots,v_m)$ for $V^m$ defines by Lemma~\ref{lem:Fiberwise:8} a sequence of chain homotopy equivalences and thus a chain of isomorphisms
  \begin{align*}
    HF_*(L,q,V^m;C) \cong HF_{*-1}(L,q,V^{m-1};C) \cong \cdots \cong HF_{*-m}(L,q,0;C)
  \end{align*}
  which by the fact that these are locally maps of local systems is locally constant in the choice of such a basis and thus only dependent on the orientation class that the basis defines. Thus the only thing left to prove is that these two choices of isomorphisms differ by a sign.

  We may assume that the dimension of $N$ is at least 2. This means that when considering the two isomorphisms $HF_{*+m}(L,q,V^m;C) \cong HF_*(L,q,0;C)$ we can factor through $HF_{*+2}(L,q,V^2;C)$ (notice this works even when $m=0$ and $m=1$ since we can go up using inverses). Hence we can determine the difference of the two maps by simply considering the difference using any two compositions of the maps $F_v^m$ in Equation~\eqref{eq:13}. So, to see that the maps for the two different choices of orientations only differ by a sign (on the level of homology) we consider two orthogonal directions $v_1,v_2 \in V^m$ and denote the complement of $v_1$ by $V_1^{m-1}$ and $v_2$ by $V_2^{m-1}$ respectively. We also denote the common complement of the plane they span in $V^m$ by $V^{m-2}=V_1^{m-1}\cap V_2^{m-1}$. Now consider the diagram
  \begin{align*}
    \xymatrix{
    CF_*(L,q,V^m;C) \ar[r]^{F^m_{v_1}} \ar[d]^{F^m_{v_2}} & CF_*(L,q,V_1^{m-1};C) \ar[d]^{F^{m-1}_{v_2}}\\
    CF_*(L,q,V^{m-1}_2;C) \ar[r]^{F^{m-1}_{v_1}} & CF_*(L,q,V^{m-2};C)
    }
  \end{align*}
  We now run the same type of argument as in the proof of Lemma~\ref{lem:Fiberwise:8} above but with the family of functions given by
  \begin{align*}
    g&\strut_{s_1,s_2}(x) = \\
    &-x_0^2 - \cdots - x_{m-2}^2 + (x_{m-1}^2-s_1)x_{m-1} +(x_m^2-s_2)x_m+x_{m+1}^2 + \cdots + x_n^2
  \end{align*}
  using the two directions. Here we see by similar action arguments and with $s_1=s_2=-\epsilon$ that we get a family of acyclic complexes. For $s_1=s_2=\epsilon$ we get (for small $t$) that the differential can be written as
  \begin{align*}
    d =     
    \begin{pmatrix}
      d^m & F_{v_1}^m & F_{v_2}^m & H^{m}_{v_1,v_2} \\
      0 & d_1^{m-1} & 0 & F_{v_2}^{m-1} & \\
      0 & 0 & d_2^{m-1} & F_{v_1}^{m-1} & \\
      0 & 0 & 0 & d^{m-2}
    \end{pmatrix}
  \end{align*}
  Indeed, we have four critical points of $g$, but two of them have the same critical value. It is an easy application of Lemma~\ref{lem:Fiberwise:1} to see that for small $t$ the entries at position $(3,2)$ and $(2,3)$ are zero. Indeed, for small $t$ the action interval of each of the two bunches narrows around the same value, but we get a lower bound from Lemma~\ref{lem:Fiberwise:1} (used on $g_{\epsilon,\epsilon}$ and one of the critical points) on any disc with endpoints in both - hence no such disc exists for small $t$. Similarly we can identify the other entries with the maps $F_{v_i}^{m'}$ (up to homotopy) since no interaction between the two bunches at the same action level means that if we tip a little to $s_1>s_2$ the two equal critical values becomes slightly different, and then for small $t$ we get the four bunches at different action level. Then Corollary~\ref{cor:Fiberwise:2} implies that the map induced on the fiber-wise complexes from the bunch at the highest critical value to the bunch at the second highest critical value is $F_{v_1}^m$ (up to chain homotopy). Similarly for the other three maps.

  Now the fact that this differential squares to 0 gives that $H^m_{v_1,v_2}$ is a chain homotopy equivalence from $F^{m-1}_{v_2}\circ F^m_{v_1}$ to $-F^{m-1}_{v_1}\circ F^m_{v_2}$.
\end{proof}

\begin{lemma}
  \label{lem:Fiberwise:4}
  The fiber-wise intersection Floer homology satisfy Poincare duality
  \begin{align*}
     HF_*(L,q,V^m;C)^\dagger \cong HF_{n-*}(L,q,(V^m)^\perp;C^\dagger)
  \end{align*}
  where $C^\dagger$ denotes the fiber-wise dual local system over $L$ tensor the rank 1 local system of orientations on $L$.
\end{lemma}

\begin{proof}
  Firstly, if we replace $g$ by $2g$ in the definition of $L_t$ and apply the Hamiltonian flow of $-g\circ \pi : T^*N \to \R$ for time $1$ then $K_t$ and $L_t$ are flowed to the two Lagrangians
  \begin{align*}
    Q_t = tL - dg \qquad \textrm{and} \qquad P_t = tL + dg.
  \end{align*}
  This is more symmetric, and for small $t$ these can be used to define the Fiber-wise Floer homology. Indeed, it does not matter that we are using $2g$ nor does the local Floer homology change when we apply the Hamiltonian isotopy (the bunching of all critical points near the intersection, and the bound in Lemma~\ref{lem:Fiberwise:1} is valid throughout the isotopy for sufficient small $t$). The primitives used on these can be chosen as:
  \begin{align*}
    f^{P_t}(z) & = t f^L(t^{-1} (z-dg_{\pi(z)})) + g(\pi(z))   \\
    f^{Q_t}(z) & = t f^L(t^{-1} (z+dg_{\pi(z)})) - g(\pi(z)).
  \end{align*}
  If we now exchange $g$ for $-g$ we get the exact same two Lagrangians from this construction, but in the opposite order. Hence we change the sign of the action and simultaneously the directions of the pseudo-holomorphic discs counted in the differential.

  Since the Floer intersection chain complexes are given by a finite direct sum of fibers of the local system (which each may be infinite dimensional) the dual complex is a finite sum over the dual fibers of the local systems. Hence if we choose trivializations of $\absv{o_z}$ for all $z\in P_t\cap Q_t$ we can use the fact that the Lagrangians are the same to identify
  \begin{align*}
     CF_*(L,q,V^m;C)^\dagger \cong CF_{n-*}(L,q,(V^m)^\perp;C^\dagger)
  \end{align*}
  as vector spaces. The differential differs in signs by introducing the local system of orientations on $L$ since this ``inversion'' realizes Poincare duality of $L$ (see \cite{MR2441780} section 12 for details on these signs and the Poincare duality).
\end{proof}

For the proofs in Section~\ref{sec:consequences} the most important consequence of this section is the following ``0-dimensional'' Poincare duality for the fiber-wise Floer homology.

\begin{corollary} \label{cor:Fiberwise:1}
  The shift and the Poincare duality properties imply that
  \begin{align*}
    HF_*(L,q,0;C)^\dagger \cong HF_{-*}(L,q,0;C^\dagger).
  \end{align*}
  depending on a choice of orientation of $T_qN$.
\end{corollary}


\section{The spectral sequence}\label{sec:spectral-sequence}

In this section we construct the spectral sequence described in the introduction. However, as mentioned we will not do this for an arbitrary Morse function $g:N \to \R$. So, we start by describing the Morse function we are going to use in more detail.

By taking product with a large dimensional sphere we may assume that $N$ has dimension at least 6. Indeed, if the dimension is less than 6 then all the results follow from the same results for $L\times S^9 \subset T^*(N\times S^9)$. So, we may pick a Morse function $g : N \to \R$ and a pseudo-gradient $X : N \to TN$ such that:
\begin{itemize}
\item The pair is Morse-Smale,
\item the function $g$ is self-indexing (i.e. Morse index = critical value), and
\item if $x$ and $y$ are critical points of $g$ with adjacent Morse indices then there are either no pseudo-gradient trajectories connecting them or precisely 1.
\end{itemize}
Note that this last requirement can always be accomplished - by introducing a birth of two critical points along any unwanted gradient trajectory. This replaces a single gradient trajectory with 3, but also introduces two new critical points of which one can control the rigid trajectories down to lower dimensional strata (see e.g. \cite{MR0190942}).

Let $q_i$ denote the critical points of $g$. As in Section~\ref{sec:fiber-wise-self} let $L$ be an exact Lagrangians and define $K_t=tL$ and $L_t=tL+dg$. However, in this section we consider the global situation for this specific $g$ and do not focus on a specific critical point $q$. We therefore (for small $t$) introduce the filtration on the entire complex:
\begin{align*}
  F^p(CF_*(K_t,L_t;C))
\end{align*}
given by restricting to all the intersection point with action less than $p+\tfrac12$. We are now suppressing all small perturbations needed to properly define these. The continuation maps for perturbations of these will preserve the filtration as long as the action of an intersection point never crosses $p+\tfrac12$ for any $p$. Since the action values of the intersection points bunch around the critical values of $g$ (all integers) for small $t$ this is true for small $t$.

This filtration defines a spectral sequence converging to the intersection Floer homology of $L$ with $L$ with coefficients in $C$.

\begin{proposition}
  \label{prop:Spectral:1}
  Page 1 of this spectral sequence is isomorphic as a bi-graded chain complex to $CM_{*_1}(g;HF_{*_2}(L,q,0;C))$.
\end{proposition}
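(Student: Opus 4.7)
The plan is to analyze the spectral sequence page by page. I would first use the action clustering of Lemma~\ref{lem:Fiberwise:1} together with the self-indexing of $g$ to pin down $E_0$ and show that $d_0$ is fiber-local; then apply Morse shifting (Lemma~\ref{lem:Fiberwise:3}) to identify $E_1$ with the correct bigraded group; and finally match $d_1$ with the Morse differential via a concentration/gluing argument that identifies ``long'' inter-cluster Floer strips with gradient trajectories of $g$ decorated with fiberwise data.

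Since $g$ is self-indexing, its critical values lie in $\{0,1,\dots,n\}$. By Lemma~\ref{lem:Fiberwise:1}, for small $t$ every intersection point of $K_t$ and $L_t$ is within $O(t)$ of some $q_i$ and has action $O(t)$-close to $g(q_i)$. Consequently the graded piece $F^p/F^{p-1}$ is the direct sum of the clusters at critical points of Morse index $p$, giving
\[ E_0^{p,*} \;=\; \bigoplus_{\mathrm{ind}(q_i)=p} CF_{\ast}(K,L,q_i,V^{p};C), \]
with $V^p \subset T_{q_i}N$ the unstable subspace determined by $\varphi_i$. The induced $d_0$ must preserve filtration degree, so the strips it counts have action change at most the total spread of one graded piece, which is $O(t)$. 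For small enough $t$ this is below the constant $\delta$ of Lemma~\ref{lem:Fiberwise:1}, forcing every such strip (whose symplectic area equals its action change) to be confined to a single $\mathrm{im}(\Phi_i)$. Hence $d_0 = \bigoplus_i \partial_i$, with $\partial_i$ the fiberwise Floer differential at $q_i$.

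Taking homology and applying Morse shifting (Lemma~\ref{lem:Fiberwise:3}) with the chart-induced orientations on each $V^p$, we obtain
\[ E_1^{p,*} \;\cong\; \bigoplus_{\mathrm{ind}(q_i)=p} HF_{\ast-p}(K,L,q_i,0;C), \]
which is exactly the underlying bigraded group of $MC_p(g; HF_{\ast_2}(K,L,-,0;C))$, viewed as a local system on $N$ via Lemma~\ref{lem:Fiberwise:2}. What remains is to identify the differential $d_1$ with the Morse differential of this local system.

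By action/index bookkeeping, every contribution to $d_1\co E_1^{p,*}\to E_1^{p-1,*}$ is represented by a rigid Floer strip between a cluster at $q_i$ (index $p$) and a cluster at $q_j$ (index $p-1$), and by our Morse--Smale hypothesis there is at most one gradient trajectory $\gamma$ of $g$ between such $q_i$ and $q_j$. The main obstacle is to show that, as $t\to 0$, such rigid inter-cluster strips concentrate onto $\gamma$ in the base and are, at the $E_1$ level, counted by the continuation map of the local system $HF_{\ast_2}(K,L,-,0;C)$ along $\gamma$. This is the parametric version of Floer's original identification in~\eqref{eq:2} of pseudo-holomorphic strips with gradient trajectories, now applied over the flow line $\gamma$ and with the fiberwise Floer complexes in place of the pointwise generators: the family-of-fibers continuation used to build the local system in Lemma~\ref{lem:Fiberwise:2} is realized at the level of $E_1$ by exactly these stretched strips. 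Once this concentration/gluing theorem is in place, $d_1$ is by construction the Morse differential with coefficients in the local system $HF_{\ast_2}(K,L,-,0;C)$, completing the identification of page~1.
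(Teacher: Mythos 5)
Your identification of $E_0$, the locality of $d_0$ via the action/area bound of Lemma~\ref{lem:Fiberwise:1}, and the bigraded form of $E_1$ via Lemmas~\ref{lem:Fiberwise:2} and~\ref{lem:Fiberwise:3} all match the paper's argument. The gap is in the last step, which is the heart of the proposition: you reduce the identification of $d_1$ to a ``concentration/gluing theorem'' asserting that, as $t\to 0$, rigid inter-cluster strips degenerate onto the gradient trajectory $\gamma$ and compute the parallel transport of the local system along $\gamma$. You correctly flag this as the main obstacle, but you do not prove it, and it is not a routine parametric version of Floer's comparison: it is precisely the hard adiabatic-limit analysis underlying the Fukaya--Seidel--Smith construction that this paper is built to avoid. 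As written, the proof of the one genuinely new claim in the proposition is missing.

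The paper's route around this is soft and exploits the special choice of $g$ (exactly $0$ or $1$ gradient trajectories between critical points of adjacent index), a hypothesis you state but never use. One first perturbs the critical values of an adjacent pair $q_i,q_j$ by $\pm\delta'$ and shrinks $t$ so that the two bunches are isolated in an action window containing no other generators; the subquotient complex in that window is a $2\times 2$ upper-triangular block whose off-diagonal entry is the relevant component of $d_1$. If there is no gradient trajectory between $q_i$ and $q_j$, the two critical points can be moved to the same level and the energy bound forces this entry to vanish. If there is exactly one trajectory, the pair can be merged into a birth--death singularity along it, and the off-diagonal entry is then the cancellation isomorphism already constructed in the proof of Lemma~\ref{lem:Fiberwise:3} (up to the usual Morse-theoretic sign from the orientations of unstable manifolds). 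No analysis of degenerating holomorphic strips is needed. To repair your proof without importing the adiabatic-limit machinery, you should replace your final paragraph by this birth--death cancellation argument.
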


Here $CM_*(g;A)$ denotes the Morse homology complex of $g$ using the pseudo-gradient $X$ with coefficients in the graded local system $A$. Notice, that unlike the fiber bundle example above this may be non-trivial in negative $*_2$-gradings.

\begin{proof}
  Page one of such a spectral sequence has entry in bi-grading $(p,d)$ equal to the $(d+p)^\textrm{th}$ homology group of the quotient
  \begin{align*}
    (F^p(CF_*(K_t,L_t;C)) / F^{p-1}(CF_*(K_t,L_t;C)),\partial) = : C_{p,*}.
  \end{align*}
  For small $t$ the intersection points $K_t\cap L_t$ will cluster around the critical points $q_i$ and their action values will be close to the associated critical value $g(q_i)$ (see Section~\ref{sec:fiber-wise-self}). This critical value is the Morse index since $g$ is self indexing. The differential on each of the bunches around different critical points $q_i$ and $q_j$ with the same critical value cannot interact. Indeed, for small $t$ this would violate the energy bound from Lemma~\ref{lem:Fiberwise:1} on discs with one marked point sent to one bunch and the other to the other bunch. We thus get that the above quotient complex splits as a direct sum of the fiber-wise chain complexes from Section~\ref{sec:fiber-wise-self}:
  \begin{align*}
    C_{p,*} = \bigoplus_{
      \begin{subarray}{c}
        q_i \textrm{ critical} \\
        g(q_i)=p
      \end{subarray}
    } CF_*(L,q_i,V^m_i;C).
  \end{align*}
  Here $V^m_i$ is the negative eigenspace of the Hessian of $g$ at $q_i$. The homology of each of these are by Lemma~\ref{lem:Fiberwise:3} isomorphic and shifted by the Morse index (which by the self-indexing property equals $p$). By this and Lemma~\ref{lem:Fiberwise:2} we get
  \begin{align*}
    H_{d+p}(C_{p,*}) \cong \bigoplus_{
      \begin{subarray}{c}
        q_i \textrm{ critical} \\
        g(q_i)=p
      \end{subarray}
    } HF_{d}(L,q_i,0;C).
  \end{align*}
  For each summand this isomorphism depends on a choice of orientation of the unstable manifold at the critical point, but that is as it should be (since Morse homology works that way). Indeed, to argue what the differential (on page 1) is, we need to be careful with orientations (comparing with a CW complex structure on $N$ one needs to pick orientations of each cell before we can define the degree of attaching maps).

  The differential on page 1 of the spectral sequence is independent of $t$ for small $t$. Indeed, since there can be no interactions between the individual bunches of critical points (associated to the same Morse index) there can be no handle slides for small $t$.
  
  Fix $q_i$ and $q_j$ critical for $g$ with adjacent Morse indices, i.e. $p-1=g(q_j)<g(q_i)=p$. For very small $t$ we can pick a very small $\delta'$ and change $g$ by a small perturbation such that
  \begin{itemize}
  \item the critical value of $q_i$ becomes $p-\delta'$ and
  \item the critical value of $q_j$ becomes $p-1+\delta'$.
  \end{itemize}
  We can do this such that the change that this makes to $K_t$ and $L_t$ does not affect the identification above of page 1. Indeed for very small $t$ and $\delta'$ there are no possible interactions between any of the bunches approximately on the same action level (i.e. no disc can go from one to the other) - even while we push the action level of some of them up or down a little bit (the area bound $a>0$ in Lemma~\ref{lem:Fiberwise:1} can be assumed to be much larger than $\delta'$). It also does not affect the differential that we wish to identify. Indeed, any handle sliding is ruled out by the same argument.

  Now by making $t$ even smaller (which again does not change the above identification) we can make sure that the clustering around the critical point values is such that
  \begin{itemize}
  \item The intersection points in the bunch close to $q_j$ have action in the interval $p-1+[2\delta'/3,4\delta'/3]$,
  \item The intersection points in the bunch close to $q_i$ have action in the interval $p-[2\delta'/3,4\delta'/3]$, and
  \item The intersection points in bunches close to all other critical points have action in the intervals $\N + [-\delta'/3,\delta'/3] \subset \R$.
  \end{itemize}
  This means that there are no critical action values close to $p-1+\delta'/2$ and $p-\delta'/2$. The identification of the differential now follows by considering the chain complex defined by restricting to action between $p-1+\delta'/2$ and $p-\delta'/2$. Indeed, this is either:
  \begin{itemize}
  \item The birth-death situation we considered in Corollary \ref{cor:Fiberwise:2} (if there is a single gradient trajectory between the associated critical points).
  \item Or a situation where we can actually move the lower bunch up to the same height as the other and see that the differential on the fiber-wise homology has to be $0$ (by homotopy invariance). Indeed, if there are no gradient trajectories between the two critical points of $g$ we can by changing $g$ close to the unstable and stable manifolds move the critical points of $g$ in this way (see e.g. \cite{MR0190942}).
  \end{itemize}
  The first point uses the isomorphisms we saw in Lemma~\ref{lem:Fiberwise:3} and Corollary~\ref{cor:Fiberwise:2}, which has a sign depending on whether this cancellation is compatible with the chosen orientations on the unstable manifolds or not, which precisely is one way of defining the signs in $CM_*(g;A)$. So this is the Morse complex differential with the local coefficient system $HF_*(L,\bullet,0;C)$.
\end{proof}


\section{Local systems on the universal cover of $N$} \label{sec:local-syst-univ}

With the same assumptions as in Section~\ref{sec:fiber-wise-self} we will in this section define versions of the fiber-wise intersection Floer homology on the universal covering space of $N$ and prove compatibility with pull back and push forward maps. Then we will generalize Corollary~\ref{cor:Fiberwise:1} to dualizing the local systems on the universal covers.

Most of the results in this section are easy consequences of the following corollary to Lemma~\ref{lem:Fiberwise:1}. However, the introduced language and notation will be convenient for the general proof of Theorem~\ref{thm:1}.

Let $\pi_N:N'\to N$ be the universal covering space of $N$. To this we have an associated universal covering $T^*N' \to T^*N$.

\begin{corollary} \label{lem:Local:2}
  Assume all the conditions of Lemma~\ref{lem:Fiberwise:1} - except assume that $u$ has both points $u(\pm 1)$ mapping to $T^*B_R(q)$ instead of precisely one of them. Additionally assume that $u$ has energy less than $a$. Then $u$ is homotopic in $T^*N$ relative to $\{\pm 1\} \subset D^2$ to a map in $T^*B_R(q)$.
\end{corollary}

\begin{proof}
  Since the disc relative the points is homotopy equivalent to the interval relative its endpoints this is a question of what $u$ represents in
  \begin{align*}
    \pi_1(T^*N,T^*B_R(q)) \cong \pi_1(T^*N,q).    
  \end{align*}
  However, assuming it represents something non-trivial the disc will lift to have two endpoints in $T^*N'$ which are in two different components of the non-connected pre-image of the contractible sub-space $T^*B_R(q)$. Hence as in the proof of Lemma~\ref{lem:Fiberwise:1} this intersects the pre-image of $W$ (from that proof) in $T^*N'$. This implies that $u$ in fact intersects $W$ non-trivially, which gives a contradiction (if $a$ is chosen as in that proof).
\end{proof}

Define the covering space $\pi_L: L'\to L$ by the pull back diagram
\begin{align} \label{eq:6}
  \xymatrix{
    L' \ar[r]^{j'} \ar[d]^{\pi_L}  &  T^*N' \ar[d] \\
    L \ar[r]^{j} &  T^*N
  }
\end{align}
Note that a priori $L'$ can have more components than $L$. Indeed, we are not lifting the map - we are taking the pull back. In the following we will refer to $L \subset T^*N$ as ``downstairs'' and $L' \subset T^*N'$ as ``upstairs''. Any (graded) local system on $N$ or $L$ can be pulled back to a (graded) local system on $N'$ or $L'$ by $\pi_N$ or $\pi_L$ respectively. However, recall that we only consider local systems on $L$ which have support in degree $0$.

Let $C' \to L'$ be a local system of $\F$-vector spaces. The push forward $\pi_{L*} C'$ is the local system on $L$ defined by
\begin{align*}
  (\pi_{L*}C')_z = \bigoplus_{z'\in \pi^{-1}(z)} C_{z'}'.
\end{align*}
We define the intersection Floer homology $HF_*(L',L';C')$ as the Floer homology $HF_*(L,L;\pi_{L*} C')$, which means that we have an associated fiber-wise Floer homology
\begin{align*}
  HF_*(L,\bullet,0;\pi_{L*} C')
\end{align*}
as in Section~\ref{sec:fiber-wise-self}. We may similarly use $\pi_N$ to push forward graded local systems on $N'$ to $N$. We now also define a version of the fiber-wise Floer homology on the covering space which we will see is compatible with both push forward and pull back maps.

Let $(q,V^m,g,t,H,J')$ be all the data needed to define an instance of the fiber-wise complex associated to the Floer homology with coefficients $\pi_{L*}C'$ at some point $q\in N$. The reader may heuristically consider this as a $\pi_1$-equivariant perturbation on the covering space. Now let $q' \in \pi^{-1}(q)$ be a choice of lift of $q$ then as a graded vector spaces we define
\begin{align} \label{eq:7}
  CF_*(L',q',V^m;C') = \smashoperator{\bigoplus_{z'\in L'\cap L' \cap T^*B_{R/2}(q')} } \absv{o_{\pi_L(z')}} \otimes C'_{z'}
\end{align}
(after the perturbation). By definition we have an isomorphism as graded vector spaces
\begin{align} \label{eq:9}
  CF_*(L,q,V^m;\pi_{L*}C') \cong  
   \smashoperator{\bigoplus_{q'\in \pi^{-1}(q)}} CF_*(L',q',V^m;C') = \pi_{N*} CF_*(L',q',V^m;C')
\end{align}
where both sides are defined using the same perturbation data. However, the content of Corollary~\ref{lem:Local:2} is that the differential actually respects this splitting. So the graded vector space in Equation~\eqref{eq:7} is naturally a chain complex, and we define the fiber-wise intersection Floer homology $HF_*(L',q',V^m;C')$ as its homology. It follows precisely as before that this defines a graded local system on the choices of $q'\in N'$ together with an $m$ dimensional subspace $V^m \subset T_{q'}N$.

By this definition we now have two natural isomorphisms:
\begin{align} \label{eq:11}
  HF_*(L,\bullet,0;\pi_{L*} C') \cong \pi_{N*} HF_*(L',\bullet,0;C')
\end{align}
for any local system $C'$ on $L'$ and
\begin{align} \label{eq:12}
  HF(L',\bullet,0;\pi^*_L C) \cong \pi^*_N HF(L,\bullet,0;C).
\end{align}
for any local system $C$ on $L$.

We have the following generalization of Floer's result and the spectral sequence in Proposition~\ref{prop:Spectral:1}.

\begin{lemma} \label{lem:Local:1}
  We have
  \begin{align*}
    HF_*(L',L';C') \cong H_*(L';C')
  \end{align*}
  and the associated spectral sequence in Proposition~\ref{prop:Spectral:1} can on page 2 canonically be identified with
  \begin{align*}
    H_*(N';\F) \otimes HF_*(L',\bullet,0;C').
  \end{align*}
\end{lemma}

\begin{proof}
  This is an easy consequence of Equation~\eqref{eq:11}, Equation~\eqref{eq:12}, the fact that $N'$ is simply connected, and the fact that
  \begin{align*}
    H_*(N';A) \cong H_*(N;\pi_{N*}A) \cong HM_*(g;\pi_{N*}A)
  \end{align*}
  for any graded local system $A$ on $N'$.
\end{proof}

For any local system $C'$ on $L'$ we define (similar to the definition in Section~\ref{sec:fiber-wise-self}) its dual $C'^\dagger$ over $L'$ to be the fiber-wise dual vector space tensored with the local system defined by orientations on $L'$. Notice that even if $L$ is orientable $\pi_{L*}(C'^\dagger)$ is not generally isomorphic to $(\pi_{L*}C')^\dagger$ if $\pi_1(N)$ is not finite.

The above observations now makes it possible to generalize the Poincare duality from Corollary~\ref{cor:Fiberwise:1} to this dualization.

\begin{corollary}\label{cor:Local:1}
  For fixed $q'\in N'$ we have an isomorphism
  \begin{align*}
    HF_*(L',q',0;C'^\dagger) \cong HF_{-*}(L',q',0;C')^\dagger.
  \end{align*}
\end{corollary}

Since these are trivial local systems we do not really need to fix $q'$. However, the following proof is easier to mentally parse downstairs when $q=\pi_N(q')$ is considered a fixed point.

\begin{proof}
  By considering the above definition (using Corollary~\ref{lem:Local:2}) of the differential of $CF_*(L',q',0;C')$ as counting discs downstairs in $T^*N$ all the proofs (considering the point $q=\pi_N(q')$ fixed) in Section~\ref{sec:inters-floer-homol} generalizes to this case.
\end{proof}


\section{Proof of Theorem 1}\label{sec:consequences}

This section contains a proof of Theorem~\ref{thm:1}. So assume $L\subset T^*N$ is an exact Lagrangian with vanishing Maslov class. In this section $g$ is a function as in Section~\ref{sec:spectral-sequence} such that Proposition~\ref{prop:Spectral:1} holds for this $g$. 

As a warm up we start by giving a proof of homotopy equivalence in the case $\pi_1(N)=1$ and connected $L$. This is similar to the argument given in \cite{MR2596633} - except that instead of using the notion of the span of the homology we use the fiber-wise Poincare duality in Corollary~\ref{cor:Fiberwise:1}. This is also one reason we are able to get stronger results.

As in \cite{MR2596633} we start by using coefficients $\F_2=\Z/2$. This means that without assumptions the intersection Floer homology is defined. The trivial fundamental group implies that the local system $HF_*(L,\bullet,0;\F)$ is trivializable over $N$. This means that Proposition~\ref{prop:Spectral:1} implies that the spectral sequence (with trivial local system $C=\F_2$ on $L$) on page two is isomorphic to
\begin{align*}
  H_{*_1}(N;\F_2)\otimes HF_{*_2}(L,q_0,0;\F_2).
\end{align*}
Since the higher differentials cannot kill the degree $(0,*_2)$ with $*_2$ the lowest degree where $HF_{*_2}(L,q_0,\F_2)$ is supported this has to survive to page infinity. This leads to a contradiction if this $*_2$ degree is negative. Indeed, the homology of $L$ is supported in positive degrees. So
\begin{align*}
  HF_{*_2}(L,q_0;\F_2) = 0 \qquad *_2<0.
\end{align*}
Now the Poincare duality in Corollary~\ref{cor:Fiberwise:1} implies that the support is purely in degree $0$ ($L$ is oriented with respect to $\F_2$). It follows that the spectral sequence collapses on page 2 and that
\begin{align*}
  H_*(L;\F_2) \cong H_*(N;\F_2^{\oplus k}), 
\end{align*}
where $k$ necessarily equals the rank of $HF_0(L,q_0;\F_2)$, which by assumption is 1. This implies that there is an abstract graded isomorphism between the $\F_2$-homologies of $L$ and $N$, and that the Euler characteristic of $CF_*(L,q,0;\F_2)$ is 1. The latter implies that $L\to N$ has degree 1 (see the general argument below for details on this part), which implies that the induced map $H_*(L;\F_2) \to H_*(N;\F_2)$ is surjective. Combining this with the knowledge of an abstract isomorphism between the two we get that $L\to N$ is an $\F_2$ homology equivalence.

Now as noted in \cite{MR2596633} this implies that the map $L\to N$ is relatively orientable and relative spin, and that $L\subset T^*N$ has a relative pin structure so that we can define the homologies with $\F$ coefficients for any field. Now the exact same argument proves homology equivalence over any field, which implies homology equivalence over $\Z$.

We wish to also prove that $\pi_1(L)$ is trivial. So, for contradiction assume that this is not the case. Then we have a nontrivial cyclic sub-group $G\subset \pi_1(L)$ This has an associated covering space $\widetilde{L} \to L$ with $\pi_1(\widetilde{L})=G$ and hence $H_1(\widetilde{L},\F)\neq 0$ for some field $\F$. Let $C$ be the local system of $\F$ vector spaces which has $H_*(L;C) \cong H_*(\widetilde{L},\F)$. Consider page 2 of the spectral sequence from Proposition~\ref{prop:Spectral:1} using these coefficients:
\begin{align*}
  H_*(N,\F) \otimes HF_*(L,\bullet,0;C).
\end{align*}
This converges in the abutment to $H_*(\widetilde{L};\F)$, and as above we conclude that $HF_*(L,\bullet,0;C)=0$ for $*<0$. Since $H_*(\widetilde{L},\F)$ has non-trivial $H_1$ and $N$ is simply connected we conclude that $HF_1(L,q,0;C) \neq 0$. Now the Poincare duality in Corollary~\ref{cor:Fiberwise:1} shows that 
\begin{align*}
  HF_{-1}(L,q,0;C^\dagger)\cong HF_{1}(L,q,0;C)^\dagger\neq 0.
\end{align*}
However, again as above (using the spectral sequence with local system $C^\dagger$ on $L$) this is contradictory to the fact that $H_*(L;C^\dagger)$ has non-negative support and that 
\begin{align*}
   H_*(N;\F) \otimes HF_*(L,q,0;C^\dagger)
\end{align*}
converges to it in the abutment.

For the general proof of Theorem~\ref{thm:1} we divide the argument into a few lemmas. 

We are no longer assuming that $L$ is connected. Let $L'\to T^*N'$ be as in Equation~\eqref{eq:6}. The Lagrangian $L'$ can have more components than $L$. Firstly we consider the trivial local system $\F$ on $L'$. The push-forward of this to $L$ is the the same as the pull-back of the local system on $N$ which represents the universal covering space $N' \to N$. So, we denote this by $C^N$. Corollary~\ref{cor:Grading:1} shows that
\begin{align} \label{eq:5}
  HF_*(L,L;C^N) \cong H_*(L;C^N) \cong H_*(L';\F)
\end{align}
when defined (relative pin structure required when $\chr \F \neq 2$).

\begin{lemma}\label{lem:Results:1}
  When defined the Fiber-wise Floer homology is concentrated in degree 0 and 
  \begin{align*}
    HF_0(L,q,0;\F) \cong \F^k
  \end{align*}
  where $k$ is the rank of $H_0(L';\F)$. In particular this implies that this rank is finite.
\end{lemma}

\begin{proof}
  By Equation~\eqref{eq:12} we have
  \begin{align*}
    HF_*(L',\bullet,0;\F) \cong \pi_{N}^*HF_*(L,\bullet,0;\F),
  \end{align*}
  so we may prove the statement in the lemma for this local system on $N'$.

  By Lemma~\ref{lem:Local:1} we have that
  \begin{align*}
    H_*(N';\F) \otimes HF_*(L',\bullet,0;\F)
  \end{align*}
  is page two of a spectral sequence converging to $H_*(L';\F)$. This implies that
  \begin{align*}
    HF_0(L',\bullet,0;\F) \cong H_0(L';\F),
  \end{align*}
  and that this Fiber-wise homology is supported in non-negative degree. Similarly, the spectral sequence for the local system $\F^\dagger$ (dual over $L'$) converging to $H_*(L';\F^\dagger)$ shows that $HF_*(L',\bullet,0;\F^\dagger)$ is trivial in negative degrees. By Corollary~\ref{cor:Local:1} this implies that $HF_*(L',\bullet,0;\F)$ is trivial in positive degrees.
\end{proof}

\begin{lemma} \label{lem:Results:3}
  Both $L$ and $L'$ are connected. In particular
  \begin{align*}
    HF_*(L;\bullet,0;\F) \cong \F
  \end{align*}
  is defined and is the trivial local system for any $\F$. Furthermore, $j:L \to T^*N$ induces a homology equivalence and a surjection on $\pi_1$.
\end{lemma}

\begin{proof}
  Firstly assume that $\F=\F_2$. The vanishing of the Maslov class implies that $L'$ is orientable (since $N'$ is). For some orientations on $L'$ and $N'$ let $p$ denote the degree of the map $L' \to N'$ at a generic $q'$ defined by the sum of $\pm 1$ associated with the orientations of the linear isomorphisms:
  \begin{align*}
    D_{x}j' : T_xL' \to T_{q'}N'
  \end{align*}
  for all $x\in \pi^{-1}(q') \subset L'$. We will call a layer of $L'$ positive if it contributes positively to this and negative otherwise. Note that this is independent of $q'$ since the map $L' \to N'$ is proper and $N'$ is connected. For any orientation on $N'$ we may pick the orientation on each component of $L'$ such that each component contributes non-negatively to the degree. Now, let $q'\in N'$ be a lift of a global minimum $q\in N$ of $g$. We may assume that the cotangent fiber $T^*_qN$ is transverse to $L$.

  We can compute the Euler characteristic of $HF_*(L,q,0;\F)$ as $p^2$. Indeed, for small $t$ we see that the Lagrangians $K_t=tL+dg$ and $L_t=tL$ will be transverse to each other, and the parity of the Maslov index of an intersection can be computed using the orientation sign of the two layers of the lift associated with the intersection (see figure~\ref{fig:1}).
  \begin{figure}[ht]
    \centering
    \begin{tikzpicture}
      \draw[->] (-2,0) -- (2,0) node [right] {$N'$};
      \draw[->] (0,-1.4,0) -- (0,1.7) node [below left] {$T^*_{q'}N$};
      \draw (0,0) node [below right] {$q'$};
      \draw plot [smooth] coordinates {(-2,1.1) (0.8,0.7) (-0.8,-0.5) (2,-0.9)};
      \fill (0,0.9) circle (1.4pt) node [above right] {$+$};
      \fill (0,0.1) circle (1.4pt) node [left] {$-$};
      \fill (0,-0.7) circle (1.4pt) node [below left] {$+$};
      \draw (0.8,0.7) node [above right] {$L$};
    \end{tikzpicture}
    \begin{tikzpicture}
      \draw[->] (-2,0) -- (2,0) node [right] {$N'$};
      \draw[->] (0,-1.4,0) -- (0,1.7) node [below left] {$T^*_{q'}N$};
      \draw (0,0) node [below right] {$q'$};
      \draw plot [smooth] coordinates {(-1.5,1.59) (0.8,-0.73) (-0.8,0.75) (1.5,-1.57)};
      \draw plot [smooth] coordinates {(-2,0.11) (0.8,0.07) (-0.8,-0.05) (2,-0.09)};
      \draw (-0.8,0.7) node [above right] {$L_t$};
      \draw (-0.8,0) node [below] {$K_t$};
      \draw[dashed] (0,0) circle (0.4);
      \draw[dashed] (0.35,0.1) -- (2.5,0.5);
      \draw[dashed] (3.5,0.5) circle (1.0);
      \draw[dotted] (2.6,0.7) -- (4.4,0.7);
      \draw[dotted] (2.6,0.5) -- (4.4,0.5);
      \draw[dotted] (2.6,0.3) -- (4.4,0.3);
      \draw[dotted] (2.85,1.15) -- (4.15,-0.15);
      \draw[dotted] (3.0,1.3) -- (4.3,0.00);
      \draw[dotted] (2.7,1) -- (4,-0.3);
      \draw (3.0,0.7) node {$+$};
      \draw (3.3,0.7) node {$-$};
      \draw (3.6,0.7) node {$+$};
      \draw (3.2,0.5) node {$-$};
      \draw (3.5,0.5) node {$+$};
      \draw (3.8,0.5) node {$-$};
      \draw (3.4,0.3) node {$+$};
      \draw (3.7,0.3) node {$-$};
      \draw (4.0,0.3) node {$+$};
    \end{tikzpicture}
    \caption{Intersection signs giving Maslov parity (also indicated by a sign)}.
    \label{fig:1}
  \end{figure}
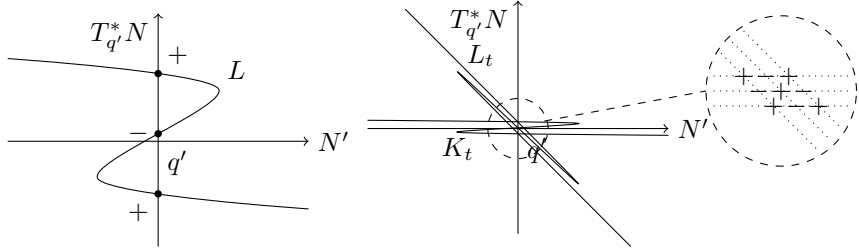
 If $L'$ has $p+k$ positive layers and $k$ negative layers at $q'$ we therefore get $(p+k)^2+k^2$ even parity Maslov indices of intersection points and $2(p+k)k$ odd parity Maslov indices of intersections points. Hence the Euler characteristic of the complex is $p^2$. This implies together with Lemma~\ref{lem:Results:1} that in fact $H_*(L';\F) \cong HF_0(L,q,0;\F) \cong \F^{p^2}$.

  Now assume $L$ can be divided into two components $L_1 \cup L_2$ (each not necessarily connected). Then we can do the same as above, but for each $L_i$ and its covering spaces $L_i'$. Call the degree of each lifts $p_i$ (with the same choices of orientations as the previous paragraph) then $p=p_1+p_2$. The same argument for $L_1$ and $L_2$ as distinct Lagrangians shows that $p_1^2=\rank H_0(L_1')\neq 0$ and $p_2^2=\rank H_0(L_2')\neq 0$. This gives that
  \begin{align*}
    \rank (H_0(L')) = &p^2 = (p_1+p_2)^2 = p_1^2 + p_2^2 + 2p_1p_2 = \\
     = & \rank(H_0(L_1')) + \rank(H_0(L_2')) + 2p_1p_2
  \end{align*}
  which is a contradiction since 
  \begin{align*}
    \rank(H_0(L'))=\rank(H_0(L_1')) + \rank(H_0(L_2'))
  \end{align*}
  So, $L$ is connected.

  Now assume that $p^2> 1$. Since $L$ is connected, and $L'$ is not, the map $\pi_1(L) \to \pi_1(N)$ is not surjective - in fact $\absv{\pi_1(N)/\im(\pi_1(L))} = \rank(H_0(L'))=p^2$. This means that there is a covering space of $N$ with $p^2$ layers (associated to the image sub-group) where the lift of $L$ has $p^2$ components, but such a lift of $N$ is finite and hence compact, and this contradicts the fact that we just proved that exact Lagrangians in such are connected. So, $p=1$ and even $L'$ is connected. We also conclude that the map $L \to N$ has degree 1. Note, that degree is defined by passing to oriented covers in the case where $L$ and $N$ are non-orientable.

  This means that the local systems $HF_*(L,q,0;\F)$ is free and of rank 1 with support in degree 0. Now assume for contradiction that it is not trivial. Then we get by the spectral sequence in Proposition~\ref{prop:Spectral:1} using $C=\F$ that $H_0(L;\F)\cong 0$, which is a contradiction. We conclude that $H_*(N,\F) \cong H_*(L,\F)$, but since we have not proven naturality with respect to $j$ we can only claim this as an abstract isomorphism. However, we \emph{have} proven that the map $j$ has degree 1, and a degree 1 map of closed manifolds is surjective with field coefficients, and so this abstract isomorphism shows (since the dimensions agree) that it is also injective.

  Now as before all this implies existence of relative pin structure on $L$, and we can therefore run the parts of the argument needed using a general field $\F$ to obtain homology equivalence.
\end{proof}

\begin{lemma}\label{lem:Results:2}
  The map on fundamental groups induced by $j$ is injective.
\end{lemma}

\begin{proof}
  Assume for contradiction that it has a kernel $\{1\} \neq \pi_1(L') \subset \pi_1(L)$, which as indicated by this notation is the fundamental group of the covering $L'$. Let $G\subset \pi_1(L')$ be a non-trivial cyclic sub-group of prime order (or order $\infty$) and $\widetilde{L} \to L'$ its corresponding covering space. Now let $C_G'$ denote the local coefficients over the field $\F_{\absv{G}}$ (with the convention $\F_\infty=\Q$) on $L'$ corresponding to this covering, and define $C_G=\pi_{L*}C_G'$. This is the local system on $L$ corresponding to the covering space $\widetilde{L}\to L'\to L$.
  
  Now this and Equation~\eqref{eq:11} shows that 
  \begin{align*}
    H_*(L;C_G) \cong H_*(L';C_G') \cong HF_*(L,L;C_G) \cong \pi_{N*} HF_*(L',L';C_G')
  \end{align*}
  Again by Lemma~\ref{lem:Local:1} page two of the spectral sequence associated to $C_G$ is
  \begin{align*}
    H_*(N')\otimes HF_*(L',\bullet,0;C_G')
  \end{align*}
  and converges to $H_*(L';C_g')$. Again we conclude that $HF_*(L',\bullet,0;C_G')$ has non-negative support, but also that it is non-trivial in degree 1. However, using Corollary~\ref{cor:Local:1} we see that this contradicts that $H_*(L';C_G'^\dagger)$ is supported in non-negative degree since the spectral sequence with the dual coefficients $HF_*(L',\bullet,0;C_g'^\dagger)$ on $N'$ converges to this.
\end{proof}


\bibliographystyle{plain}
\bibliography{Mybib}

\end{document}